\newcommand*\circled[1]{\tikz[baseline=(char.base)]{
		\node[shape=circle,draw,inner sep=2pt] (char) {#1};}}
\newtheorem{theorem}{Theorem}
\newtheorem{remark}{Remark}
\newtheorem{proposition}{Proposition}
\newcommand{\be}{\begin{eqnarray}}
\newcommand{\ee}{\end{eqnarray}}
\newcommand{\nn}{{\nonumber}}
\newcommand{\LL}{{\cal L}}
\newcommand{\ignore}[1]{}
\newcommand{\sFB}{{\cal FB}}
\newcommand{\diam}{{\rm diam}}
\newcommand{\sLB}{{\cal LB}}
\newcommand{\sALB}{{\cal ALB}}
\begin{document}
	\title{Euler numbers and diametral paths in Fibonacci cubes, Lucas cubes and Alternate Lucas cubes}
	\author{
		\"Omer E\u{g}ecio\u{g}lu
		\thanks{Department of Computer Science, University of California Santa Barbara, Santa Barbara, California 93106, USA. email: omer@cs.ucsb.edu}
		\quad
		Elif Sayg{\i}
		\thanks{Department of Mathematics and Science Education, Hacettepe University, 06800, Ankara, Turkey. email: esaygi@hacettepe.edu.tr}
		\quad
		Z\"ulf\"ukar Sayg{\i}
		\thanks{Department of Mathematics, TOBB University of Economics and Technology, 06560, Ankara, Turkey. email: zsaygi@etu.edu.tr}
	}
	
	
	\maketitle
	
	\begin{abstract}
		The diameter of a graph is the maximum distance between 
		pairs of vertices in the graph. A pair of vertices whose distance is equal to 
		its diameter are called diametrically opposite vertices. The collection of shortest paths between diametrically 
		opposite vertices are referred as diametral paths. In this work, 
		we enumerate the number of diametral paths for Fibonacci cubes, Lucas cubes 
		and Alternate Lucas cubes. We present bijective proofs that
		show that  these numbers are related to alternating permutations and 
		are enumerated by Euler numbers. 
		\\
		\textbf{Keywords}: Shortest path, diametral path, Fibonacci cube, Lucas cube, Alternate Lucas cube, Euler number. 
		\\
		\textbf{MSC[2020]}:{ 05C38 \and 05A05 \and 11B68}
	\end{abstract}

	\maketitle
	
\linenumbers

\section{Introduction}\label{section.intro}	
Given a connected graph $G=(V,E)$, one of the basic problem is to enumerate the number 
of shortest paths between pairs of vertices in $G$. 
The solution to this problem
provides an important topological property of an interconnection network, 
in terms of its connectivity, fault-tolerance, 
communication expense \cite{IPL2002} and 
has important applications such as for counting minimum $(s,t)$-cut in 
planar graphs and route guidance systems \cite{TCS12}.

The so-called
single-source shortest paths problem consists of finding the shortest paths between 
a given vertex and all other vertices in the graph. One can solve this problem 
by using the algorithms such as Breadth-First-Search for unweighted graphs or 
Dijkstra's algorithm \cite{Dijkstra}. 
Similarly, Dijkstra's algorithm can be used to solve the single-pair shortest paths problem 
in a weighted, directed graph with nonnegative weights.

The process of finding all shortest paths between a pair of 
vertices in a graph is another problem. This can be considered a 
search for the most efficient routes through 
the graph. In \cite{CR_NP}, it is proved that finding the number 
of shortest paths in a general graph is NP-hard.

For planar graphs with $m$ vertices an oracle is presented in \cite{ISAAC18} to 
find the number of shortest paths for a given pair of vertices whose time complexity is 
$O(\sqrt{m})$ with $O(m^{1.5})$ space. This approach is 
improved in \cite{TCS22} and a new oracle for counting shortest paths in 
planar graphs is presented, where Voronoi diagrams are used 
to speed up the query time.

In the literature, the problem of enumerating the number of 
shortest paths have been considered for some special families of graphs. Explicit formulas
have been obtained for the hexagonal network \cite{GSSS}, the star 
graph \cite{IR}, the $(n, k)$-star graph \cite{CGLQS,CQS} and 
the arrangement graph \cite{CGQS}. In an $(n, k)$-star graph the number of 
shortest paths is enumerated by counting the minimum factorizations of 
a permutation in terms of the transpositions corresponding to edges in that graph \cite{CQS}. 
For  the arrangement graph this number is enumerated by establishing a bijection 
between these shortest paths and a collection of ordered forests 
of certain bi-colored trees \cite{CGQS}.

The distance $d(u,v)$ between two vertices $u,v\in V$ is 
the number of edges in a shortest path between $u$ and $v$. 
The {\em diameter} of $G$ is defined as the maximum distance between pairs of vertices in $V$ 
and is denoted by $\diam(G)$.

A pair of vertices $u,v\in V$ with $d(u, v) = \diam(G)$, are called {\em diametrically opposite} 
vertices. The collection of shortest paths between diametrically opposite vertices are 
referred to as {\em diametral paths}. For a pair 
of diametrically opposite vertices $u,v\in V$ we let $c(u,v;G)$ 
denote the number of diametral paths from $u$ to $v$ in $G$.

As an example, for the $n$-dimensional hypercube $Q_n$ 
the number of diametral 
paths between any diametrically opposite pair $u$ and $v$ 
can be enumerated by establishing a bijection between these shortest paths and the 
permutations on $n$ symbols, so that 
$$
c(u,v;Q_n)=n! \,.
$$

In this paper we enumerate the number of diametral paths for three special subgraphs of 
hypercube graphs, namely Fibonacci cubes \cite{Hsu}, Lucas cubes \cite{Lucas} 
and Alternate Lucas cubes \cite{ALucas}.
We present bijective proofs of our results.
Surprisingly, these numbers are  related to 
alternating permutations and are enumerated by Euler numbers. 
 
\section{Preliminaries}\label{section.prelim}
We let $[n] = \{1,2, \ldots, n\}$.
The $n$-dimensional hypercube $Q_n$ is the graph defined on the vertex set $B_n$, where
$$B_n=\{b_1b_2\ldots b_n\mid b_i\in\{0,1\},\  i \in [n] \} \, .$$
Two vertices $u,v\in B_n$ are adjacent if and only if the Hamming distance $d(u,v)=1$, that is, $u$ and $v$ differ in exactly one coordinate. For convenience, $Q_0=K_1$. It is clear from the definition that 
$\diam(Q_n)=n$ and for any vertex $u\in B_n$ there exist a unique vertex $\bar{u}\in B_n$ such that $d(u,\bar{u})=n$, where $\bar{u}$ denotes the complement of the binary string of $u$.

For $n\ge 1$, let 
$$\sFB_n=\{b_1b_2\ldots b_n\in B_n\mid b_i\cdot b_{i+1}=0,\ i \in [n-1]\} \, .$$
The $n$-dimensional Fibonacci cube $\Gamma_n$ ($n\ge1$) is an induced subgraph of $Q_n$ with 
vertex set $\sFB_n$. We take $\Gamma_0=K_1$.
Similarly, for $n\ge 1$, let 
$$\sLB_n=\{b_1b_2\ldots b_n\in \sFB_n\mid b_1\cdot b_n=0\}$$
and for $ n \geq 3$,
$$\sALB_n=\{b_1b_2\ldots b_n\in \sFB_n\mid b_n\cdot b_{n-2}=0\} \, .$$
The $n$-dimensional Lucas cube $\Lambda_n$ and Alternate 
Lucas cube $\LL_n$ are defined as the induced subgraphs of $\Gamma_n\subseteq Q_n$ 
and with sets $\sLB_n$ and $\sALB_n$, respectively. 

$Q_n$ has a useful decomposition in which its vertex set is partitions into two sets 
$B_n=0B_{n-1}\cup 1B_{n-1}$,
where $0B_{n-1}$ denotes the vertices that start with a $0$ and $1B_{n-1}$ denotes 
the vertices that start with a $1$.
Using this decomposition we can write
$$Q_n=0Q_{n-1}+1Q_{n-1}$$
where $0Q_{n-1}$ and $1Q_{n-1}$ denote the induced subgraphs of $Q_n$ with vertex 
sets $0B_{n-1}$ and $1B_{n-1}$ respectively, and $+$ denotes 
the perfect matching between $0Q_{n-1}$ and $1Q_{n-1}$.
Similarly, we have the following fundamental decompositions for Fibonacci cubes, Lucas cubes and 
Alternate Lucas cubes:
\be
\Gamma_n=0\Gamma_{n-1}+10\Gamma_{n-2} \, ,
\nn\ee
where there is a perfect matching between $10\Gamma_{n-2}$ and $00\Gamma_{n-2}\subset 0\Gamma_{n-1}$,
\be
\Lambda_n=0\Gamma_{n-1}+10\Gamma_{n-3}0 \, ,
\nn\ee
where there is a perfect matching between $10\Gamma_{n-3}0$ and $00\Gamma_{n-3}0\subset 0\Gamma_{n-1}$,
\be
\LL_n=0\LL_{n-1}+10\LL_{n-2} \, ,
\nn\ee
where there is a perfect matching between $10\LL_{n-2}$ and $00\LL_{n-2}\subset 0\LL_{n-1}$.

\subsection{Euler numbers}\label{section.Euler}
Following \cite{Stanley}, a permutation $\sigma =  \sigma_1 \sigma \ldots \sigma_n$ of 
$[n]$ is 
{\em alternating} if 
$\sigma_1 > \sigma_2, < \sigma_3 > \sigma_4 < \cdots$. In other words, $\sigma_i < \sigma_{i+1} $ for $i$ 
even and $a_i > a_{i+1}$ for $i$ odd.
$\sigma$ is {\em reverse alternating} if $ \sigma_1 < \sigma_2 > \sigma_3 < \sigma_4 \cdots$.
Let $E_n$ denote the number of alternating permutations of $[n]$ with $E_0 =1$. 
These are known as the {\em Euler numbers}. The number of reverse alternating permutations 
of $[n]$ is also given by 
$E_n$.

By a result of D{\'e}sir{\'e} Andr{\'e} \cite{Andre}, we have
$$
2 E_{n+1} = \sum_{k=0}^n { n \choose k} E_k E_{n-k} \, ,
$$
and the exponential generating function of the sequence of Euler
numbers is given by 
\begin{eqnarray*}
	\sum_{n \geq 0} E_n \frac{x^n}{n!} & =& \sec x + \tan x \\
	&=& 1 + x + \frac{x^2}{2!} + 2 \frac{x^3}{3!} + 5 \frac{x^4}{4!} + 16 \frac{x^5}{5!} + 61 \frac{x^6}{6!}+\cdots
\end{eqnarray*}
First few terms of Euler numbers (sequence A000111 in the OEIS \cite{oeis})
are
$E_0 = 1 $,
$E_1 = 1 $,
$E_2 = 1 $,
$E_3 = 2 $,
$E_4 = 5 $,
$E_5 = 16 $,
$E_6 = 61 $. 

\section{Calculation for the Fibonacci cubes}\label{section.fibo}	
In this section we determine the number of 
diametral paths in $\Gamma_n$. 
Since $n= \diam(\Gamma_n)$, these paths are of length $n$.
$\Gamma_n$ is an induced subgraph of $Q_n$ with vertex set $\sFB_n$. We have the following 
easy result.

\begin{proposition}\label{prop.Fibo.max.distance.vertices}
	There is a unique pair of diametrically opposite vertices in $\Gamma_n$. They are
	\begin{itemize}
		\item[(i)] $u=(01)^{\frac{n}{2}}$ and $v=(10)^{\frac{n}{2}}$ if $n$ is even,
		\item[(ii)] $u=(01)^{\frac{n-1}{2}} 0 $ and  $v=(10)^{\frac{n-1}{2}} 1 $ if $n$ is odd.
	\end{itemize}
\end{proposition}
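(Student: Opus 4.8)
The plan is to characterize the diametrically opposite vertices of $\Gamma_n$ as exactly those complementary pairs $\{u,\bar u\}$ that both lie in $\sFB_n$, and then to observe that forbidding two consecutive $1$'s in both $u$ and $\bar u$ forces $u$ to be one of the two alternating strings. Throughout I will use the elementary inequality $d_{\Gamma_n}(x,y)\ge d_{Q_n}(x,y)$, valid because $\Gamma_n$ is an induced subgraph of $Q_n$: any $\Gamma_n$-path is a $Q_n$-path, and along a $Q_n$-path each edge flips exactly one coordinate, so a walk of length $\ell$ from $x$ to $y$ must flip each of the $d_{Q_n}(x,y)$ differing coordinates at least once, giving $\ell\ge d_{Q_n}(x,y)$. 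Combined with $\diam(\Gamma_n)=n$, this will pin down the extremal pairs.

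First I would handle existence. In case (i) the strings $u=(01)^{n/2}$ and $v=(10)^{n/2}$ are complements, so $d_{Q_n}(u,v)=n$; likewise in case (ii) $u=(01)^{(n-1)/2}0$ and $v=(10)^{(n-1)/2}1$ are complements with $d_{Q_n}(u,v)=n$. In both cases the two strings are alternating, hence contain no two consecutive $1$'s, so $u,v\in\sFB_n$. The inequality above then gives $d_{\Gamma_n}(u,v)\ge n$, while $d_{\Gamma_n}(u,v)\le\diam(\Gamma_n)=n$; hence $d_{\Gamma_n}(u,v)=n$ and the pair is diametrically opposite. If one prefers not to take $\diam(\Gamma_n)=n$ as given, one instead exhibits an explicit length-$n$ path inside $\sFB_n$ that flips the differing coordinates one at a time without ever creating two adjacent $1$'s, which simultaneously shows $\diam(\Gamma_n)\ge n$.

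For uniqueness I would argue that any diametral pair must be complementary. Here the key structural input is that $\Gamma_n$ is an isometric subgraph of $Q_n$, i.e. $d_{\Gamma_n}(x,y)=d_{Q_n}(x,y)$ for all $x,y\in\sFB_n$ (the partial-cube property of Fibonacci cubes). Granting this, a diametral pair satisfies $d_{Q_n}(u,v)=d_{\Gamma_n}(u,v)=n$, and in $Q_n$ the only vertex at Hamming distance $n$ from $u$ is its complement $\bar u$, so $v=\bar u$. Now $u\in\sFB_n$ forbids two consecutive $1$'s in $u$, while $\bar u\in\sFB_n$ forbids two consecutive $1$'s in $\bar u$, i.e. forbids two consecutive $0$'s in $u$. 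Thus $u$ has no two equal adjacent bits, so $u$ is alternating and is determined by its first bit. The two possibilities are precisely the strings listed in (i) when $n$ is even and in (ii) when $n$ is odd, and they form the single pair $\{u,v\}$.

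I expect the main obstacle to be the isometry step used for uniqueness: the trivial direction $d_{\Gamma_n}\ge d_{Q_n}$ already suffices for existence, but ruling out a non-complementary pair whose $\Gamma_n$-distance is artificially inflated up to $n$ requires the reverse inequality $d_{\Gamma_n}(x,y)\le d_{Q_n}(x,y)$. This can be obtained either by citing that Fibonacci cubes are partial cubes, or by a short self-contained induction on $n$ using the decomposition $\Gamma_n=0\Gamma_{n-1}+10\Gamma_{n-2}$ to construct, for arbitrary $x,y\in\sFB_n$, a path of length $d_{Q_n}(x,y)$ that never produces two adjacent $1$'s. Everything else reduces to a routine case check on the parity of $n$.
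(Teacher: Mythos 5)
Your proof is correct. Note that the paper itself offers no argument for this proposition (it is introduced as an ``easy result'' and stated without proof), so there is no authorial proof to compare against; your write-up supplies the missing justification. The logic is sound: existence follows from the trivial inequality $d_{\Gamma_n}(x,y)\ge d_{Q_n}(x,y)$ together with $\diam(\Gamma_n)=n$, and uniqueness correctly reduces to the observation that a complementary pair $\{u,\bar u\}$ with both members in $\sFB_n$ forces $u$ to avoid both the factor $11$ and the factor $00$, hence to be one of the two alternating strings. You are also right to flag the one genuinely nontrivial ingredient, namely the reverse inequality $d_{\Gamma_n}(x,y)\le d_{Q_n}(x,y)$: without it a pair at Hamming distance $n-2$ (of the same parity as $n$) could conceivably have its $\Gamma_n$-distance inflated to $n$, and the diameter bound alone does not exclude this. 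That Fibonacci cubes are isometric (indeed median/partial cube) subgraphs of $Q_n$ is classical and is implicitly assumed throughout the paper (e.g.\ in asserting $\diam(\Gamma_n)=n$ and in the path tables of Section 3, where every diametral path flips each coordinate exactly once); either citing it or running your proposed induction on the decomposition $\Gamma_n=0\Gamma_{n-1}+10\Gamma_{n-2}$ closes the argument completely.
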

Even though $ \Gamma_n$ is undirected, 
here we view the edges on each such path to be 
directed from $u$ to $v$. By direct inspection we have $c(u,v;\Gamma_1)=1$, $c(u,v;\Gamma_2)=1$, $c(u,v;\Gamma_3)= 2$, $c(u,v;\Gamma_4)=5$, $c(u,v;\Gamma_5) = 16$.
For a path
$$
u = s_0  \rightarrow s_1 \rightarrow \cdots \rightarrow s_n = v~,
$$
each vertex $s_{i+1}$ is obtained from the vertex $s_i$ by flipping a 0 to a 1, 
or a 1 to a 0, with the proviso that no consecutive 1s
appear in any $s_i$. 
We see in particular that $c(u,v;\Gamma_3)= 2$ as there are 
two paths of length 3 from $u$ to $v$ when $n=3$ as shown in the Table \ref{table0}.

\begin{table}[H]
\begin{center}
	\begin{tabular}{|r|ccc|}
		\hline
		Step & $~b_1~$ & $~b_2~$ & $~b_3~$ \\ \hline 
		$v=s_3$ & 1&0&1 \\ 
		$s_2$   & 1&0&0 \\ 
		$s_1$   & 0&0&0 \\ 
		$u=s_0$ & 0&1&0\\ \hline
	\end{tabular}
	\quad
	\begin{tabular}{|r|ccc|}
		\hline
		Step & $~b_1~$ & $~b_2~$ & $~b_3~$ \\ \hline 
		$v=s_3$ & 1&0&1 \\ 
		$s_2$   & 0&0&1 \\ 
		$s_1$   & 0&0&0 \\ 
		$u=s_0$ & 0&1&0\\ \hline
	\end{tabular}
\end{center}
	\caption{Two different paths from $u=010$ to $v=101$ in $\Gamma_3$.}\label{table0}
\end{table}

Here we write $u$ in the bottom most row. The $i$th step 
shows the string $s_i$ after $i$ edges on the path have 
been traversed. Note that in this representation, the path proceeds from bottom up 
and the row indices are increasing from bottom up as well.

By using this representation we give a bijective proof that the
sequence of the numbers of diametral paths in Fibonacci cubes is precisely
the sequence of Euler numbers.
\begin{theorem}\label{theorem_cn_Fibo}
		Let $u,v\in\Gamma_n$ such that $d(u,v)=n$. Then for $ n \geq 1$, we have $$c(u,v;\Gamma_n) = E_n~,$$
		where $E_n$ is the $n$th Euler number.
\end{theorem}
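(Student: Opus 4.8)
The plan is to set up a bijection between diametral paths in $\Gamma_n$ and alternating permutations of $[n]$, and then to invoke the fact from Section~\ref{section.Euler} that the latter are counted by $E_n$. The starting observation is that in both cases of Proposition~\ref{prop.Fibo.max.distance.vertices} the vertices $u$ and $v$ are complementary, so they differ in all $n$ coordinates. Consequently any path of length $n = \diam(\Gamma_n)$ from $u$ to $v$ must flip each of the $n$ coordinates exactly once. I would therefore encode each path by the permutation recording, for every coordinate $i \in [n]$, the time step $t_i$ at which coordinate $i$ is flipped. This assignment $(t_1, t_2, \ldots, t_n)$ is a permutation of $[n]$, and forgetting the Fibonacci restriction it is clearly a bijection between such permutations and length-$n$ paths from $u$ to $\bar u$ in $Q_n$.

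Next I would impose the Fibonacci condition (no two consecutive $1$s) and read it off locally, one adjacent pair $(i,i+1)$ at a time. Using the explicit form of $u$ and $v$, exactly one coordinate of each adjacent pair goes up ($0 \to 1$) while the other goes down ($1 \to 0$): for $i$ odd, coordinate $i$ goes up and $i+1$ goes down, while for $i$ even the roles are reversed. Such a pair produces a forbidden $11$ somewhere along the path precisely when its up-flip occurs before its down-flip; hence avoiding consecutive $1$s at positions $i,i+1$ in every intermediate string is equivalent to requiring the down-flip to precede the up-flip. Translating this into the times $t_i$ gives $t_i > t_{i+1}$ for odd $i$ and $t_i < t_{i+1}$ for even $i$, which is exactly the condition that $(t_1, \ldots, t_n)$ be an alternating permutation in the sense of Section~\ref{section.Euler}.

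Finally I would check soundness in both directions: every valid path yields an alternating permutation by the necessity argument above, and conversely every alternating permutation, read as a flip order, keeps all adjacent pairs out of the $11$ state and so traces a legitimate path inside $\sFB_n$. Since $(t_1,\ldots,t_n) \mapsto (\text{flip order})$ is a bijection on permutations of $[n]$, this identifies diametral paths with alternating permutations and yields $c(u,v;\Gamma_n) = E_n$. The parity of $n$ affects only the endpoints, and it is worth verifying separately that the final adjacent constraint (a ``$>$'' for $n$ even and a ``$<$'' for $n$ odd) remains consistent with the pattern started at $t_1 > t_2$; in both cases it is, so no special casing of $E_n$ is required.

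The step I expect to be the crux is the local analysis of the second paragraph: proving that the sole obstruction to a valid path is the temporary creation of a $11$ on some adjacent pair, and that this is governed entirely by the relative order of the single up- and down-flips on that pair. Once this \emph{down-before-up} criterion is established and shown to be simultaneously necessary and sufficient across all pairs, the reduction to the alternating pattern, and hence to $E_n$, is immediate.
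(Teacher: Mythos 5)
Your proposal is correct and is essentially the paper's own argument: the paper also encodes each path by the permutation of flip times (read off by marking, in each column, the step at which that coordinate changes) and derives the alternating condition from the observation that a coordinate can only rise to $1$ after both neighbours have already dropped to $0$, which is exactly your down-before-up criterion. The only difference is presentational — the paper works through the bijection via explicit tables rather than stating the local pairwise criterion abstractly.
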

\begin{proof}
	We give a bijection between paths of length $n$ from $u$ to $v$ in $\sFB_n$ and alternating permutations $\sigma$ of 
	$[n]$.
	The bijection is best communicated by an example. Suppose $ n = 8$ and we are given the path 
	from $u = 01010101$ to $ v = 10101010$ whose steps are shown in Table \ref{table1}.
	
	\begin{table}[H]
		\begin{center}
			\begin{tabular}{|r|cccccccc|}
				\hline
				Step & $~b_1~$ & $~b_2~$ & $~b_3~$ & $~b_4~$ & $~b_5~$ & $~b_6~$ & $~b_7~$ & $~b_8~$ \\ \hline 
				$v=s_8$ & 1&0&1&0&1&0&1&0  \\ 
				$s_7$   & 1&0&1&0&1&0&0&0\\ 
				$s_6$   & 1&0&0&0&1&0&0&0\\
				$s_5$   & 1&0&0&0&1&0&0&1\\ 
				$s_4$   & 0&0&0&0&1&0&0&1 \\ 
				$s_3$   & 0&1&0&0&1&0&0&1 \\ 
				$s_2$   & 0&1&0&0&0&0&0&1 \\ 
				$s_1$   & 0&1&0&0&0&1&0&1 \\ 
				$u=s_0$ & 0&1&0&1&0&1&0&1 \\ \hline
			\end{tabular}
		\end{center}
		\caption{A path from diametrically opposite vertices $u=(01)^4$ to $v=(10)^4$ in $\Gamma_8$.}\label{table1}
	\end{table}
	
	As the first step, we mark the first appearance of 1 as we go up the table in every column with 
	an odd index. In Table \ref{table2} these entries are circled.
	\begin{table}[H]
		\begin{center}
			\begin{tabular}{|r|cccccccc|}
				\hline
				Step & $~b_1~$ & $~b_2~$ & $~b_3~$ & $~b_4~$ & $~b_5~$ & $~b_6~$ & $~b_7~$ & $~b_8~$ \\ \hline 
				$v=s_8$ & 1&0&1&0&1&0&{\bf \circled{1}}&0  \\ 
				$s_7$   & 1&0&{\bf \circled{1}}&0&1&0&0&0\\ 
				$s_6$   & 1&0&0&0&1&0&0&0\\ 
				$s_5$   & {\bf \circled{1}} &0&0&0&1&0&0&1\\ 
				$s_4$   & 0&0&0&0&1&0&0&1 \\ 
				$s_3$   & 0&1&0&0&{\bf \circled{1}}&0&0&1 \\ 
				$s_2$   & 0&1&0&0&0&0&0&1 \\ 
				$s_1$   & 0&1&0&0&0&1&0&1 \\ 
				$u=s_0$ & 0&1&0&1&0&1&0&1 \\ \hline
			\end{tabular}
		\end{center}
		\caption{First appearance of 1 as we go up in every column with an odd index is marked in the path from $u=(01)^4$ to $v=(10)^4$ in $\Gamma_8$.}\label{table2}
	\end{table}
	Next, we mark the first appearance of 0 as we go up the table in every column with 
	an even index. Circling these entries gives Table \ref{table3}.
	\begin{table}[H]
		\begin{center}
			\begin{tabular}{|r|cccccccc|}
				\hline
				Step & $~b_1~$ & $~b_2~$ & $~b_3~$ & $~b_4~$ & $~b_5~$ & $~b_6~$ & $~b_7~$ & $~b_8~$ \\ \hline 
				$v=s_8$ & 1&0&1&0&1&0&{\bf \circled{1}}&0  \\ 
				$s_7$   & 1&0&{\bf \circled{1}}&0&1&0&0&0\\ 
				$s_6$   & 1&0&0&0&1&0&0&{\bf \circled{0}}\\ 
				$s_5$   & {\bf \circled{1}}&0&0&0&1&0&0&1\\ 
				$s_4$   & 0&{\bf \circled{0}}&0&0&1&0&0&1 \\ 
				$s_3$   & 0&1&0&0&{\bf \circled{1}}&0&0&1 \\ 
				$s_2$   & 0&1&0&0&0&{\bf \circled{0}}&0&1 \\ 
				$s_1$   & 0&1&0&{\bf \circled{0}}&0&1&0&1 \\ 
				$u=s_0$ & 0&1&0&1&0&1&0&1 \\ \hline
			\end{tabular}
		\end{center}
		\caption{First appearance of 0/1 as we go up in every column with an even/odd index is marked in the path from $u=(01)^4$ to $v=(10)^4$ in $\Gamma_8$.}\label{table3}
	\end{table}
	After this
	we record the corresponding step number in each column. For instance column 1 gives 5, column 2 gives 4, 
	etc. by reading the indices of the corresponding rows.
	The resulting alternating permutation is below:
	\begin{center}
		~~~~~~~~~~5 ~~~~ 4 ~~~~ 7 ~~~~ 1 ~~~~ 3 ~~~~ 2~~~~  8~~~~  6
	\end{center}
	
	These steps are reversible. Suppose this time that $n= 7 $ and we are given the alternating 
	permutation $ 3 ~1~ 6~ 4~ 7~ 2~ 5 $.
	We construct Table \ref{table4} in which the odd numbered columns 1, 3, 5, 7 are assigned the label 
	1 in the rows 3, 6, 7, 5, which are the entries in the odd positions of the given permutation. 
	The even numbered columns 2, 4, 6 are assigned the label 0 in the rows 1,4, 2, which are the entries in the 
	even indexed positions of the given permutation.
	
	\begin{table}[H]
		\begin{center}
			\begin{tabular}{|r|ccccccc|}
				\hline
				Step & $~b_1~$ & $~b_2~$ & $~b_3~$ & $~b_4~$ & $~b_5~$ & $~b_6~$ & $~b_7~$  \\ \hline 
				$v=s_7$  & ~&~&~&~&{\bf \circled{1}}&~&~\\ 
				$s_6$   & ~&~&{\bf \circled{1}}&~&~&~&~\\ 
				$s_5$   & ~&~&~&~&~&~&{\bf \circled{1}}\\ 
				$s_4$   & ~&~&~&{\bf \circled{0}}&~&~&~ \\ 
				$s_3$   & {\bf \circled{1}}&~&~&~&~&~&~ \\ 
				$s_2$   & ~&~&~&~&~&{\bf \circled{0}}&~ \\ 
				$s_1$   & ~&{\bf \circled{0}}&~&~&~&~&~ \\ 
				$u=s_0$ & &~ ~&~&~&~&~&~ \\ \hline
			\end{tabular}
		\end{center}
		\caption{First appearance of 0/1 in every column with an even/odd index in the path from $u=(01)^30$ to $v=(10)^31$ in $\Gamma_7$ corresponding to the 
alternating permutation $ 3 ~1~ 6~ 4~ 7~ 2~ 5 $.}\label{table4}
	\end{table}
	
	Now we fill in the odd indexed columns of this matrix by 0, up to the marked 1 in the column, followed by 0s all the way up; 
	and we fill the even indexed columns by 1 up to the marked 0 in the column, followed by 1s 
	all the way up. This results in the path of length $n=7$ from $u$ to $v$ shown in Table \ref{table5} corresponding to the 
alternating permutation $ 3 ~1~ 6~ 4~ 7~ 2~ 5 $.
	\begin{table}[H]
		\begin{center}
			\begin{tabular}{|r|ccccccc|}
				\hline
				Step & $~b_1~$ & $~b_2~$ & $~b_3~$ & $~b_4~$ & $~b_5~$ & $~b_6~$ & $~b_7~$  \\ \hline 
				$v=s_7$  & 1&0&1&0&{\bf \circled{1}}&0&1\\ 
				$s_6$   & 1&0&{\bf \circled{1}}&0&0&0&1\\ 
				$s_5$   & 1&0&0&0&0&0&{\bf \circled{1}}\\ 
				$s_4$   & 1&0&0&{\bf \circled{0}}&0&0&0 \\ 
				$s_3$   & {\bf \circled{1}}&0&0&1&0&0&0 \\ 
				$s_2$   & 0&0&0&1&0&{\bf \circled{0}}&0 \\ 
				$s_1$   & 0&{\bf \circled{0}}&0&1&0&1&0 \\ 
				$u=s_0$ & 0 &1&0&1&0&1 &0 \\ \hline
			\end{tabular}
		\end{center}
		\caption{The path from the diametrically opposite vertices $u=(01)^30$ to $v=(10)^31$ in $\Gamma_7$ corresponding to the 
alternating permutation $ 3 ~1~ 6~ 4~ 7~ 2~ 5 $.}\label{table5}
	\end{table}
	
	Considering now the general case, we see that going from $u$ to $v$ in $n$ steps, every bit in 
	$u$ has to change exactly once. This means that the row indices of the marked entries in the matrix
	in Table \ref{table3} is a permutation $\sigma$ of $[n]$. 
	Now consider an element $ t= \sigma_i$ with odd $i$ with $1< i < n$.
	This means that in step $t$ of the path, that is in $ s_t$, the entry in the $i$th column goes from 
	0 to 1. But since all of the vertices that appear in the table as rows are Fibonacci strings. This means 
	that in $s_{t-1}$  the entries in columns $i-1$ and $i+1$ which are 
	adjacent to the entry at column $i$ must already be 0.
	Therefore 
	these entries were flipped from 1 to 0 in earlier steps. It follows that  $\sigma_i > \sigma_{i+1}$ and 
	$\sigma_i > \sigma_{i-1}$. The two extreme cases with $i=1$ and $i=n$ are handled the same way. Therefore $ \sigma$ is 
	an alternating permutation. The other direction is proved similarly.
\end{proof}

\section{Calculation for the Lucas cubes}
It is shown in \cite{Lucas} that
$$\diam(\Lambda_n)=
\left\{\begin{array}{ll}
	n & \mbox{for $n$ even} \, ,\\
	n-1 & \mbox{for $n$ odd} \, .
\end{array}\right.
$$ 
We have 
\begin{proposition}\label{prop.Lucas.max.distance.vertices}
	The number of diametrically opposite pair of vertices in $\Lambda_n$ is 1 if $n$ is even and $n$ if $n$ is odd. They are
	\begin{itemize}
		\item $(01)^{\frac{n}{2}}$ and $(10)^{\frac{n}{2}}$ if $n$ is even,
		\item cyclic shifts of the pair $0(01)^{\frac{n-1}{2}}$ and $0(10)^{\frac{n-1}{2}}$ if $n$ is odd.
	\end{itemize}
\end{proposition}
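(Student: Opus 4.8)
The plan is to first reduce the problem to one about Hamming distances. Since $\Lambda_n$ is an induced subgraph of $Q_n$, any path changes exactly one coordinate per step, so $d(u,v)\ge d_H(u,v)$ for the Hamming distance $d_H$; moreover $\Lambda_n$ is a partial cube, so in fact $d(u,v)=d_H(u,v)$ (this equality in $\Lambda_n$ is standard and is established alongside the diameter formula in \cite{Lucas}). Consequently a pair $u,v\in\sLB_n$ is diametrically opposite precisely when $d_H(u,v)=\diam(\Lambda_n)$: if $d_H(u,v)=\diam(\Lambda_n)$ then $\diam(\Lambda_n)\ge d(u,v)\ge d_H(u,v)=\diam(\Lambda_n)$ forces $d(u,v)=\diam(\Lambda_n)$, while the converse is immediate from $d=d_H$. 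Throughout I identify a Lucas string with its set of $1$-positions, an independent set of the cycle $C_n$, so that $d_H(u,v)=|S_u\triangle S_v|$.

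For even $n$ the diameter is $n$, so I must find all $u$ for which $u$ and its complement $\bar u$ both lie in $\sLB_n$. The requirement that $u$ has no two cyclically consecutive $1$'s and that $\bar u$ has no two cyclically consecutive $1$'s means $u$ has neither two cyclically consecutive $1$'s nor two cyclically consecutive $0$'s; hence $u$ is cyclically alternating, forcing $u\in\{(01)^{n/2},(10)^{n/2}\}$. These two strings are complements of one another and both lie in $\sLB_n$, giving the single pair claimed.

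For odd $n\ge 3$ the diameter is $n-1$, so I must characterize the pairs $u,v\in\sLB_n$ that agree in exactly one coordinate, say position $j$, and I split on their common value there. If $u_j=v_j=1$, then the Lucas condition forces the cyclic neighbours $j-1$ and $j+1$ to be $0$ in both strings; since $n\ge 3$ these three positions are distinct, so $u$ and $v$ would agree in at least three coordinates, a contradiction. Hence $u_j=v_j=0$. Deleting position $j$ turns $C_n$ into the path on the remaining $n-1$ vertices, and on that path $u$ and $v$ are complementary Fibonacci strings; a string whose complement is also independent on a path must be alternating, and on a path of even length $n-1$ the two alternating strings are exactly the complementary pair $0101\cdots$ and $1010\cdots$. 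Thus $j$ determines the unordered pair uniquely, and restoring the $0$ at position $j$ yields valid Lucas strings (the cyclic constraints at $j$ are vacuous). Each $j\in\Z_n$ yields one pair; these are the $n$ cyclic shifts of $\{0(01)^{(n-1)/2},0(10)^{(n-1)/2}\}$, and since the unique agreement position $j$ is recovered from the pair, the $n$ shifts are pairwise distinct.

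The main obstacle I anticipate is making the reduction to Hamming distance genuinely rigorous rather than assuming it: I rely on the partial-cube property of $\Lambda_n$ to rule out pairs whose graph distance equals the diameter while their Hamming distance is strictly smaller, which is the only way a non-listed pair could sneak in. The remaining work---checking that each exhibited string is actually a member of $\sLB_n$, that the two sub-cases in the odd analysis are exhaustive, and that the $n$ cyclic shifts are distinct---is routine bookkeeping of the independence constraints.
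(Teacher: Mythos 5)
Your proof is correct. Note, however, that the paper itself gives no proof of this proposition: it is stated bare, with only a remark deferring to \cite{Lucas} and correcting a typo there (the count $n$ rather than $n-1$ for odd $n$), so there is no argument of the authors' to compare yours against --- your write-up actually supplies the missing details. Your two main ingredients both check out: (1) the reduction of graph distance to Hamming distance via the partial-cube property of $\Lambda_n$, which is the one external fact you lean on and is indeed standard (Lucas cubes are isometric subgraphs of $Q_n$; you should verify whether this is literally proved in \cite{Lucas} or needs a separate citation, e.g.\ to Klav\v{z}ar's work on Fibonacci and Lucas cubes as partial cubes); and (2) the combinatorial classification, where for even $n$ complementarity forces cyclic alternation, and for odd $n$ the unique agreement coordinate must carry a $0$ (your three-positions argument correctly handles $n=3$, where $j-1,j,j+1$ are still distinct), after which deleting that coordinate reduces to the two alternating strings on a path of $n-1$ vertices. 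The recovery of $j$ from the unordered pair cleanly gives the distinctness of the $n$ cyclic shifts. The argument is complete.
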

\begin{remark}
	Note that there is a typo in \cite[Proposition 1]{Lucas}. For $n$ odd, 
the number of pairs of vertices in $\Lambda_n$ at distance equal to the diameter is $n$, not $n-1$.
\end{remark}
Similar to the proof of Theorem \ref{theorem_cn_Fibo} we obtain the following result for $\Lambda_n$.
\begin{theorem}\label{theorem_cn_Lucas}
	Let $u,v\in\Lambda_n$ such that $d(u,v)=\diam(\Lambda_n)$. Then for $ n \geq 2$, we have
	$$c(u,v;\Lambda_n)=
	\left\{\begin{array}{ll}
		\frac{n}{2}E_{n-1} & \mbox{for $n$ even} \, ,\\
		E_{n-1} & \mbox{for $n$ odd} \, .
	\end{array}\right.$$
\end{theorem}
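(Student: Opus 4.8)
The plan is to reuse the encoding from the proof of Theorem~\ref{theorem_cn_Fibo}, where a diametral path is recorded by the alternating permutation $\sigma$ whose $i$th entry $\sigma_i$ is the step at which coordinate $i$ is flipped. I would treat the two parities separately, since by Proposition~\ref{prop.Lucas.max.distance.vertices} the diameter and the diametrically opposite pairs differ with the parity of $n$. The one general fact I would use repeatedly is that on a shortest path of length equal to the Hamming distance, every coordinate in which $u$ and $v$ differ flips exactly once, while every coordinate in which they agree never flips.

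For odd $n$ the argument is a short reduction. Here $\diam(\Lambda_n)=n-1$, and since cyclic shift is an automorphism of $\Lambda_n$ and the $n$ diametral pairs are cyclic shifts of one another, every pair gives the same count; so I may compute for $u=0(01)^{(n-1)/2}$ and $v=0(10)^{(n-1)/2}$, which agree in the first coordinate. As $d(u,v)=n-1$, the leading bit stays $0$ along any diametral path, so the constraint $b_1b_2=0$ and the Lucas condition $b_1b_n=0$ hold automatically. Deleting the first column is then a bijection between these paths and the diametral paths of $\Gamma_{n-1}$ from $(01)^{(n-1)/2}$ to $(10)^{(n-1)/2}$, and Theorem~\ref{theorem_cn_Fibo} yields $c(u,v;\Lambda_n)=E_{n-1}$.

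For even $n$ there is a unique pair $u=(01)^{n/2}$, $v=(10)^{n/2}$ with $\diam(\Lambda_n)=n$, so the Lucas diametral paths are exactly the Fibonacci diametral paths of $\Gamma_n$ that additionally never visit a string with $b_1=b_n=1$. Under the bijection of Theorem~\ref{theorem_cn_Fibo}, bit $1$ equals $1$ from step $\sigma_1$ onward while bit $n$ equals $1$ up to step $\sigma_n-1$, so both are $1$ at some intermediate vertex precisely when $\sigma_1<\sigma_n$. Hence the Lucas diametral paths correspond bijectively to the set $S$ of alternating permutations $\sigma$ of $[n]$ with $\sigma_1>\sigma_n$, and everything reduces to proving $|S|=\frac{n}{2}E_{n-1}$.

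This enumeration is the heart of the matter, and I would resolve it with a cyclic shift rather than a direct count. The key point is that for even $n$ the shift $T(\sigma)=(\sigma_{n-1},\sigma_n,\sigma_1,\ldots,\sigma_{n-2})$ maps $S$ into itself: checking the inequalities at the two new junctions, $T(\sigma)$ is again alternating exactly because $\sigma_{n-1}>\sigma_n$ and, using $\sigma\in S$, $\sigma_n<\sigma_1$, and one verifies $T(\sigma)\in S$ as well. Since no permutation equals a nontrivial cyclic shift of itself, the order-$\frac{n}{2}$ group generated by $T$ acts freely on $S$, so every orbit has size $\frac{n}{2}$. In each orbit there is a unique permutation with first entry equal to the maximum $n$, because $n$ is a peak and therefore sits at an odd position, and iterating $T$ cycles its position through all $\frac{n}{2}$ odd positions. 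Finally, an alternating permutation of $[n]$ with $\sigma_1=n$ is just $n$ followed by a reverse-alternating permutation of $[n-1]$, of which there are $E_{n-1}$; hence $|S|=\frac{n}{2}E_{n-1}$. The steps I expect to require the most care are verifying that $T$ genuinely preserves $S$ via the junction inequalities and that the orbit representatives are precisely the permutations with $\sigma_1=n$; the remaining bookkeeping is inherited from Theorem~\ref{theorem_cn_Fibo}.
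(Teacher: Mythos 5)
Your proof is correct, and its overall architecture matches the paper's: both cases reduce, via the step-recording bijection of Theorem~\ref{theorem_cn_Fibo}, to counting alternating permutations of $[n]$ with the extra condition $\sigma_1>\sigma_n$ when $n$ is even, and to the count for $\Gamma_{n-1}$ when $n$ is odd (your ``delete the first column'' argument is the same as the paper's use of the decomposition $\Lambda_n=0\Gamma_{n-1}+10\Gamma_{n-3}0$, together with the cyclic-shift symmetry). The one genuine difference is at the key enumerative step for even $n$: the paper simply identifies the permutations satisfying $\sigma_1>\sigma_n$ as circular alternating permutations and cites Kreweras \cite{Kreweras} for the count $\frac{n}{2}E_{n-1}$, whereas you prove this count from scratch by letting the order-$\frac{n}{2}$ cyclic group generated by the two-step shift $T$ act on the set $S$, checking that $T$ preserves $S$ (using precisely the inequalities $\sigma_{n-1}>\sigma_n$ and $\sigma_n<\sigma_1$ at the new junctions), that the action is free, and that each orbit contains exactly one permutation with $\sigma_1=n$, these being in bijection with reverse alternating permutations of $[n-1]$. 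Your argument is verifiably sound (the maximum must occupy an odd, i.e.\ peak, position, and $T$ cycles that position through all $\frac{n}{2}$ odd slots), and it buys self-containedness and a transparent explanation of where the factor $\frac{n}{2}$ comes from; the paper's version is shorter but leaves the reader to consult an external reference. You are also more careful than the paper in justifying that the Lucas condition translates exactly to $\sigma_1>\sigma_n$, which the paper dispatches with ``easily verified.''
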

\begin{proof}
 Assume first that $n$ is even. By Proposition \ref{prop.Lucas.max.distance.vertices} 
we only need to consider the vertices $u=(01)^{\frac{n}{2}}$ and $v=(10)^{\frac{n}{2}}$. 
Mimicking the bijective proof of Theorem  \ref{theorem_cn_Fibo}, we arrive at
permutations $\sigma$ of $[n]$ satisfying $\sigma_i > \sigma_{i+1}$ 
for any odd index $i$ with $1\le i<n$, 
$\sigma_i > \sigma_{i-1}$ for any odd index $i$ with $1<i\le n$ 
and the extra condition $\sigma_1 > \sigma_{n}$, since in $\Lambda_n$ 
we have $b_1\cdot b_n=0$. This last requirement on $\sigma$ is 
easily verified by tracing the first appearance of a 1 in the first and the last columns of the 
table of paths that define the bijection for $\Gamma_n$. Therefore, 
$\sigma$ must be a circular alternating permutation, 
and these were enumerated by Kreweras \cite{Kreweras}.

For $n$ odd, assume that  $u_1=0(01)^{\frac{n-1}{2}}$ and $v_1=0(10)^{\frac{n-1}{2}}$. Then we know that $u_1,v_1\in 0\Gamma_{n-1}$ and since $\Lambda_n=0\Gamma_{n-1}+10\Gamma_{n-3}0$ we have
$$c(u_1,v_1;\Lambda_n)=c(u_1,v_1;0\Gamma_{n-1})=E_{n-1}\,. $$
Let $u_i$ and $v_i$ be the $i-1$ right cyclic shifts of the vertices $u_1$ and $v_1$ for $i=2,\ldots,n$, respectively. Then for any shortest path $P$ from $u_1$ to $v_1$, the $i-1$ right cyclic shifts of all the vertices in $P$ gives a shortest path from $u_i$ to $v_i$ 
for all $i\in\{2,\ldots,n\}$, which completes the proof.
\end{proof}

\section{Calculation for the Alternate Lucas cubes}
For any integer $n\ge 3$, it is shown in \cite{ALucas} that $\diam(\LL_n)=n-1$. We have 
\begin{proposition}\label{prop.ALucas.max.distance.vertices}
	For any integer $n\ge 4$, the number of diametrically opposite pair of vertices in $\LL_n$ is $4$. For $n\ge 4$, they are
	\begin{itemize}
		\item[(i)] $u=0^s(10)^{k}001$ and $v=1^s(01)^{k}010$, 
		\item[(ii)] $u=0^s(10)^{k}010$ and $v=1^s(01)^{k}001$,
		\item[(iii)] $u=0^s(10)^{k}100$ and $v=1^s(01)^{k}001$, 
		\item[(iv)] $u=0^s(10)^{k}100$ and $v=1^s(01)^{k}010$,
	\end{itemize}
	where $n=2k+3+s$, $k$ is a nonnegative integer and $s\in\{0,1\}$.
\end{proposition}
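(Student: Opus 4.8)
The plan is to reduce the statement to a purely combinatorial count. I would first use that $\LL_n$ is a partial cube, i.e.\ an isometric subgraph of $Q_n$, so that the graph distance between two vertices equals their Hamming distance; together with $\diam(\LL_n)=n-1$ from \cite{ALucas} this shows that a pair $u,v\in\sALB_n$ is diametrically opposite exactly when $u$ and $v$ agree in precisely one coordinate. The easy direction is then a direct verification: for each of the four displayed forms one checks that both strings lie in $\sALB_n$ (no two consecutive $1$s, and $b_nb_{n-2}=0$) and that they disagree in all but one position, so that the Hamming distance is $n-1$. Since the graph distance $d$ satisfies $d\ge d_H=n-1$ and $d\le\diam(\LL_n)=n-1$, each such pair is indeed diametrically opposite.

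For the converse I would fix a diametral pair $u,v$, let $j$ be the unique coordinate where they agree, and extract the structure step by step. First, the common value $u_j=v_j$ must be $0$: if it were $1$, then a neighbouring position $j'\in\{j-1,j+1\}$ (one exists since $n\ge2$) carries complementary bits $u_{j'}\neq v_{j'}$, so whichever of $u,v$ has a $1$ there would exhibit two consecutive $1$s, which is impossible. Second, off the coordinate $j$ the string $u$ must strictly alternate: for consecutive positions $i,i+1$ both different from $j$ we have $u_i+v_i=u_{i+1}+v_{i+1}=1$, and $u_i=u_{i+1}=0$ would force two consecutive $1$s in $v$ while $u_i=u_{i+1}=1$ is forbidden in $u$, so $u_i\neq u_{i+1}$. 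Hence $u$ (and likewise $v$) alternates on each of the runs $[1,j-1]$ and $[j+1,n]$, carrying a $0$ at position $j$.

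The crux is to show that $j\in\{n-2,n-1,n\}$. If instead $j\le n-3$, the three positions $n-2,n-1,n$ all lie in the alternating right run, so $(u_{n-2},u_{n-1},u_n)$ equals either $010$ or $101$; in the first case $v_{n-2}=v_n=1$ and in the second $u_{n-2}=u_n=1$, each of which violates the defining condition $b_nb_{n-2}=0$ for $v$ or for $u$ respectively. This rules out all interior positions and confines $j$ to the last three coordinates, which is where I expect the main difficulty to lie in presenting cleanly.

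Finally I would enumerate the admissible configurations for each surviving $j$. Each run admits two phases (beginning with $0$ or with $1$), and these two phases are complementary; imposing the right-run alternation and the condition $b_nb_{n-2}=0$ on \emph{both} $u$ and $v$ leaves $4$ ordered configurations for $j=n-2$, two for $j=n-1$ (the two left-run phases, with the free bit $u_n$ pinned by $u_n\neq u_{n-2}$), and two for $j=n$. The decisive bookkeeping observation is that replacing every run by its complementary phase is exactly the map that interchanges $u\leftrightarrow v$, so these ordered configurations collapse to $2$, $1$ and $1$ \emph{unordered} pairs respectively; normalising the representative so that $u$ begins with $0^s(10)^k$ identifies them with forms (i)--(ii), (iii) and (iv), for a total of $4$. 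The hypothesis $n\ge4$ enters precisely here: it guarantees the relevant run is nonempty so that its two phases are genuinely distinct (for $n=3$ the left run is empty and forms (i) and (ii) would coincide), and one should keep track of the parity through $n=2k+3+s$ to confirm the phases are exactly $0^s(10)^k$ and $1^s(01)^k$.
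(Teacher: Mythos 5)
The paper states this proposition without any proof (it is simply asserted after citing \cite{ALucas}), so there is no in-paper argument to compare against; judged on its own terms, your proof is correct and essentially complete. The one point to make explicit rather than assume is the opening reduction: equating ``diametrically opposite'' with ``Hamming distance $n-1$'' requires that $\LL_n$ be an isometric subgraph of $Q_n$. This is true and is available in \cite{ALucas}, and it also follows from the elementary observation that $\sALB_n$ is closed under lowering $1$s to $0$s (both defining conditions forbid a pair of positions from being simultaneously $1$), so one can route $u \to u\wedge v \to v$ inside $\LL_n$ in $d_H(u,v)$ steps; since the entire converse direction hinges on this, it deserves to be stated as a lemma. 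The rest of your argument checks out: the agreed coordinate $j$ must carry a $0$, both strings alternate on $[1,j-1]$ and $[j+1,n]$ with complementary phases, applying $b_n b_{n-2}=0$ to \emph{both} $u$ and $v$ forces $j\in\{n-2,n-1,n\}$, and the ordered configuration counts $4$, $2$, $2$ collapse under the involution $u\leftrightarrow v$ (which flips every run phase) to $2+1+1=4$ unordered pairs, matching (i)--(iv) with $j=n-2$ for (i)--(ii), $j=n-1$ for (iii) and $j=n$ for (iv). Your closing remark that $n\ge 4$ is needed exactly so that the left run at $j=n-2$ is nonempty --- for $n=3$ forms (i) and (ii) coincide and the count drops to $3$ --- is a correct and worthwhile sanity check that the paper itself does not mention.
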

\begin{theorem}\label{theorem_cn_ALucas}
	Let $n=2k+3+s$, $k$ be a nonnegative integer and $s\in\{0,1\}$. For $ n \geq 4$, we have
	$$c(0^s(10)^{k}100,1^s(01)^{k}001;\LL_n)=c(0^s(10)^{k}100,1^s(01)^{k}010;\LL_n)=E_{n-1}$$
	$$c(0^s(10)^{k}001,1^s(01)^{k}010;\LL_n)=c(0^s(10)^{k}010,1^s(01)^{k}001;\LL_n)={n-1 \choose 2}E_{n-3} \, .$$
\end{theorem}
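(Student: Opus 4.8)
The plan is to exploit the fact that $d(u,v)=\diam(\LL_n)=n-1$ while $u,v$ are binary strings of length $n$: exactly one coordinate agrees in $u$ and $v$, and on any geodesic that coordinate is never flipped (flipping it would force a second flip and lengthen the path). Hence every diametral path is confined to the induced subgraph of $\LL_n$ obtained by freezing that single coordinate. A direct inspection of the four pairs of Proposition~\ref{prop.ALucas.max.distance.vertices} (carried out for $s=0$, the case $s=1$ being entirely analogous) reveals the unique agreeing coordinate and its value: it is $b_n=0$ in case (iv), $b_{n-1}=0$ in case (iii), and $b_{n-2}=0$ in both (i) and (ii). First I would record these facts; the argument then splits according to which position is frozen, and it is exactly the frozen position that explains why (iii),(iv) and (i),(ii) yield different counts.

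For cases (iii) and (iv) I would show that the frozen subgraph is a copy of $\Gamma_{n-1}$. When $b_n=0$ (case (iv)) the remaining word $b_1\cdots b_{n-1}$ ranges over all of $\sFB_{n-1}$ and the $\LL_n$-constraint $b_nb_{n-2}=0$ is vacuous, so deleting the last coordinate is a graph isomorphism onto $\Gamma_{n-1}$ under which $u\mapsto 0^s(10)^{k+1}$ and $v\mapsto 1^s(01)^{k+1}$, the unique diametrically opposite pair of $\Gamma_{n-1}$ (Proposition~\ref{prop.Fibo.max.distance.vertices}). When $b_{n-1}=0$ (case (iii)) the surviving word $b_1\cdots b_{n-2}b_n$ is, precisely because of the constraint $b_{n-2}b_n=0$, again an arbitrary element of $\sFB_{n-1}$; deleting coordinate $n-1$ is then an isomorphism onto $\Gamma_{n-1}$ sending $u,v$ to the same diametral pair. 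In either case Theorem~\ref{theorem_cn_Fibo} gives $c=E_{n-1}$.

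For cases (i) and (ii) the frozen coordinate is $b_{n-2}=0$, and here the key observation is that this single $0$ decouples the word into two blocks carrying independent constraints: $b_1\cdots b_{n-3}$ is an arbitrary Fibonacci string, $b_{n-1}b_n$ is an arbitrary element of $\sFB_2=\{00,01,10\}$, and the special ALB constraint $b_nb_{n-2}=0$ that might have linked $b_n$ to the first block is now vacuous. Hence $b_1\cdots b_n\mapsto(b_1\cdots b_{n-3},\,b_{n-1}b_n)$ is an isomorphism from the frozen subgraph onto the Cartesian product $\Gamma_{n-3}\,\square\,\Gamma_2$. Under it $u$ and $v$ project in the first factor to $0^s(10)^k$ and $1^s(01)^k$, the diametrically opposite pair of $\Gamma_{n-3}$ (at distance $n-3$, with $E_{n-3}$ geodesics by Theorem~\ref{theorem_cn_Fibo}), and in the second factor to the pair $\{01,10\}$ of $\Gamma_2$ (at distance $2$, joined by the single geodesic through $00$). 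I would then count geodesics in the product exactly as the introduction counts them in $Q_n$: a geodesic is a choice of a geodesic in each factor together with an interleaving of the $n-3$ moves of the first coordinate and the $2$ moves of the second, giving $\binom{(n-3)+2}{n-3}\,E_{n-3}\cdot 1=\binom{n-1}{2}E_{n-3}$.

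The step I expect to require the most care is the product identification in cases (i)/(ii): one must verify that fixing $b_{n-2}=0$ leaves \emph{no} surviving adjacency constraint between the two blocks, which is exactly where the defining ALB condition $b_nb_{n-2}=0$ is used (it is the only constraint that could couple $b_n$ back to the first block, and it collapses), and that the edges induced on the frozen set are precisely the Cartesian-product edges. Combined with the standard principle that geodesics in a Cartesian product are enumerated by (geodesics in each factor) $\times$ (interleavings of the two coordinate-move sequences), this produces the two stated values. The remaining verifications, namely the $s=1$ subcases and the check that $u,v$ land on the asserted canonical pairs in each factor, are routine and uniform in $s$.
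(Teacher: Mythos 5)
Your proof is correct, and it takes a genuinely different route from the paper's. The paper re-runs the permutation bijection from Theorem~\ref{theorem_cn_Fibo}: it records the step at which each coordinate is flipped, characterizes the resulting permutations of $[n]$ minus the frozen index by the local alternating inequalities together with the extra condition $\sigma_{n}>\sigma_{n-2}$ (resp.\ $\sigma_{n-1}>\sigma_{n}$) forced by $b_{n-2}\cdot b_n=0$, and then relabels to obtain an alternating permutation of $[n-1]$ (resp.\ an alternating permutation of $[n-3]$ plus a free choice of two values, giving the factor ${n-1 \choose 2}$). You instead freeze the unique agreeing coordinate and identify the induced subgraph in which every diametral path must live: a copy of $\Gamma_{n-1}$ in cases (iii)--(iv), where in case (iii) the defining condition $b_{n-2}\cdot b_n=0$ of $\sALB_n$ is exactly what makes the contracted word a Fibonacci string, and the Cartesian product $\Gamma_{n-3}\,\square\,\Gamma_2$ in cases (i)--(ii); Theorem~\ref{theorem_cn_Fibo} and the interleaving count for geodesics in a product then finish the job. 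The two derivations of ${n-1 \choose 2}$ are the same count seen two ways: your interleaving of the two moves of the $\Gamma_2$ factor among the $n-1$ steps is the paper's choice of the two values $\sigma_{n-1}>\sigma_n$. Your approach buys a cleaner and more uniform reduction (cases (iii) and (iv) need no new bijective analysis at all, and Theorem~\ref{theorem_cn_Fibo} is used as a black box), at the cost of invoking the standard product-geodesic formula, which you should state and justify explicitly since the paper only uses its special case $c(u,v;Q_n)=n!$; the remaining steps you flag (that a geodesic of length $n-1=d(u,v)$ flips each disagreeing coordinate exactly once and the agreeing one never, and that deleting the frozen coordinate is Hamming-distance-preserving, hence an isomorphism onto the induced subgraph with the computed vertex set) are exactly the right ones to write out, and they all check out.
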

\begin{proof}
We sketch the proof. 
As in the proof of Theorem \ref{theorem_cn_Fibo}, 
we need to consider the permutations $\sigma$ of $[n]$ satisfying extra conditions depending on the pair of vertices.
We will give the proof for $n$ even ($s=1$) and only for the pairs 
$u=0^s(10)^{k}100$ and $v=1^s(01)^{k}001$ and 
$u=0^s(10)^{k}001$ and $v=1^s(01)^{k}010$. The other cases can be obtained similarly.

For the pair $u=0(10)^{k}100$ and $v=1(01)^{k}001$ as we consider the shortest paths we will not 
change the $(n-1)$st position since it is 0 for each vertex. Therefore we need to consider the permutations $\sigma$ of $[n] \setminus\{n-1\}$ satisfying $\sigma_i > \sigma_{i+1}$ for any odd index $i$ with $1\le i\le n-3$, 
$\sigma_i > \sigma_{i-1}$ for any odd index $i$ with $1<i\le n-3$ and $\sigma_{n} > \sigma_{n-2}$,
 since in $\LL_n$ we have $b_{n-2}\cdot b_n=0$. By setting $\tau_i=\sigma_i$ for $i=1, \ldots, n-2$ and $\tau_{n-1}=\sigma_n$ we observe that $\tau$ is an 
alternating permutation of $[ n-1]$.

Now consider the 
pair $u=0(10)^{k}001$ and $v=1(01)^{k}010$. In the shortest paths 
under consideration,
we will not change the $(n-2)$nd position since it is 0 for each vertex. Therefore we need to consider the 
permutations $\sigma$ of $[n] \setminus\{n-2\}$ satisfying $\sigma_i > \sigma_{i+1}$ 
for any odd index $i$ with $1\le i< n-3$, 
$\sigma_i > \sigma_{i-1}$ for any odd index $i$ with $1<i\le n-3$ 
and $\sigma_{n-1} > \sigma_{n}$. By setting $\tau_i=\sigma_i$ for $i=1, \ldots, n-3$ we observe that $\tau$ is an 
alternating permutation of $[n-3 ]$ and we have ${n-1 \choose 2}$ different choices for $\sigma_{n-1}, \sigma_{n}$ which gives the desired result.
\end{proof} 
\section*{Acknowledgement}
The work of the second author is supported by BAP-SUK-2021-19737 of Hacettepe University. This work is partially supported by T\"{U}B\.ITAK under Grant No. 120F125.

\end{document}